\documentclass[a4paper,10pt]{article}

\usepackage{amsmath, amssymb}
\usepackage{amsthm}
\usepackage{tipa}

\newtheorem{thm}{Theorem}[subsection]
\newtheorem{dfn}[thm]{Definiton}
\newtheorem{lem}[thm]{Lemma}
\newtheorem{rem}[thm]{Remark}
\newtheorem{cor}[thm]{Corollary}
\newtheorem{prop}[thm]{Proposition}
\baselineskip 10pt
\title{Counter Examples in Non-Archimedean Locally Convex Spaces}
\date{}
\begin{document}
	\maketitle
\begin{center}
 M.E. Egwe\\
Department of Mathematics,\\
Universty of Ibadan, Ibadan.\\
$murphy.egwe@mail.ui.edu.ng$\\
\ \\	
 J. A Braimah\\
  McPherson University, Seriki, Sotayo,\\
  Ajebo, Ogun State, NIGERIA.\\
  $braimahjoachim@gmail.com$
\ \\
	\end{center}	
	
	\begin{abstract}
		In this paper, we shall consider some counter examples in non-archimedean locally convex spaces with special closed subspaces and Schauder basis in non-archimedean Fr\'{e}chet spaces as well as closed subspaces \emph{without} Schauder basis in non-archimedean Fr\'{e}chet spaces.
	\end{abstract}
\textbf{Keywords:} Schauder basis, Schauder Decomposition/Partition, $t-$orthogonal, normable.
\section{Introduction, Preliminary Concepts and Definitions}
The analogy with Schauder basis is seen to be more appropriate than using the notion of orthogonality in non-archimedean spaces as it has many of the properties that its namesake in classical Hilbert spaces has. We have that every orthogonal basis in a metrizable locally convex spaces $X$ is a Schauder basis in $X$ and every Schauder basis in Fr\'{e}chet space $X$ is an orthogonal basis in $X$. Every bases in a Fr\'{e}chet space is Schauder. Clearly, every locally convex space with a (Schauder) base is (strictly) of countable type. That is, there is a countable set whose $K-$linear hull is dense in $X$. Conversely, any infinite-dimensional Banach space of countable type has Schauder basis, since it is known to be linearly homeomorphic (i.e isomorphic ) to the Banach space $C_0$ of all sequences in $K$ converging to zero. For Fr\'{e}chet spaces, it is not true \cite{SlW3}. Hence, there exists an infinite-dimensional Fr\'{e}chet spaces of countable type without a Schauder basis \cite{SlW5}. With construction of examples of nuclear Fr\'{e}chet spaces without a Schauder basis in \cite{SlW2} we solved the problem whether any Fr\'{e}chet space of countable type has a Schauder basis.

Nevertheless, any infinite-dimensional Fr\'{e}chet space $F$ of finite type is isomorphic to the Fr\'{e}chet space $\mathbb{K}^N$ of all sequences in $K$ (with the topology of pointwise convergence). So every closed subspace of $F$ has Schauder basis.
Moreover, any infinite-dimensional Fr\'{e}chet space contains an infinite-dimensional closed subspace with a Schauder basis \cite{SlW1}.\\
It is also known that any closed subspace of $C_0 \times \mathbb{K}^N$ has a Schauder basis \cite{SlW3}. On the other hand infinite-dimensional Fr\'{e}chet spaces $C_0,\mathbb{K}^N, C_0 \times \mathbb{K}^N$ contains a closed subspace without a Schauder basis \cite{SlW3}.

In \cite{SlW3}, A Fr\'{e}chet space $X$ which is not of finite type such that none of the subspace is isomorphic to $C_0$ is seen, from which we establish the ideas that $X$ contains infinitely many pairwise non-isomorphic closed subspaces ( without a Schauder basis ) with a strong finite-dimensional Schauder decomposition \cite{SlW3}.

\begin{dfn}
	A sequence $(x_n)$ in $X$ is a Schauder basis of $X$ if each $x \in X$ can be written uniquely as $$x = \sum_{n=1}^{\infty} a_nx_n$$
	with $(a_n) \subset \mathbb{K}$ and the coefficient functionals $x_{n}^{*}: X \rightarrow \mathbb{K} , x \rightarrow a_n, n\in \mathbb{N}$ are continuous.
	
	A sequence $(x_n)$ in $X$ is a Schauder basic sequence in $X$ if it is a Schauder basis of the closed linear span in $X$. Every base in a Fr\'{e}chet space is Schauder. Clearly every locally convex space with a base (Schauder ) is strictly of countable type.
\end{dfn}

\begin{dfn}
	Let $X$ be a locally convex space and $L(X)$ be a space of continuous maps from $X$ to itself. A sequence $(A_n) \subset L(X)$ is a Schauder partition of $X$ if $x = \sum_{n=1}^{\infty} A_n x$. A Schauder partition $(A_n)$ of $X$ is $\gamma-$finite-dimensional ($\gamma \in \mathbb{N}$) if $\sup_n \dim A_n(X) \leq \gamma$, and strong-finite-dimensional if $\sup_n \dim A_n(X) < \infty $, and finite-dimensional if $\dim A_n(X) < \infty $ for all $n \in \mathbb{N}$
	
\end{dfn}

A Schauder partition $(P_n)$ of a locally convex space $X$ is a Schauder decomposition of $X$ if $P_nP_m = \delta_{nm}P_n$ for all $n,m \in \mathbb{N}, P_nP_m = 0 $ if $n \neq m$.\\
Clearly, any locally convex space $X$ with a Schauder basis has strong finite-dimensional Schauder decomposition.

\begin{dfn}[\cite{VA1}]
	Let $p$ be a seminorm on a linear space $X$ and $t \in (0,1]$. An element $x \in X$ is $t-$orthogonal to a subspace $M$ of $X$ with respect to $p$ if
	$$p(ax + y) \geq t\max\{p(ax),p(y) \}\quad \forall a\in \mathbb{K}, y \in M$$
	
	A sequence $(x_n) \subset X$ is $t-$orthogonal with respect to $p$ if
	$$p\left( \sum_{i=1}^{n} a_i x_i \right) \geq t \max p(a_i x_i) \quad x \in \mathbb{N} \text{ and } a_1, \ldots , a_n \in \mathbb{K}$$
	
	Also,let $t_k \in (0,1]$. A sequence $(x_n)$ in a metrizable locally convex space $X$is $t_k-$orthogonal with respect to $p_k$ for every $k \in \mathbb{N}$. (If $t_k= 1$ for $k \in \mathbb{N}$, then we shall write $1-$orthogonal instead of $(1)-$orthogonal)
	
	A sequence $(x_n)$ in a metrizable locally convex space $X$ is orthogonal if it is $1-$orthogonal with respect to some base $(p_k)$ in $P(X)$.
\end{dfn}

\begin{dfn}
	$X$ is called a normable if its topology $\tau$ is generated by a norm. It is called metrizable if there is a metric on $X$ inducing the topology $\tau$.
\end{dfn}
\section{Closed Subspaces with Schauder bases in non-archimedean Fr\'{e}chet Spaces}
 It is well known that any infinite-dimensional Banach space of countable type has Schauder basis. For Fr\'{e}chet spaces. It is not in general, true \cite{SlW3}, hence there exist infinite-dimensional Fr\'{e}chet spaces of countable type without Schauder basis \cite{SlW5}. Also, any infinite-dimensional Fr\'{e}chet space $F$ of finite type is isomorphic to the Fr\'{e}chet space $\mathbb{K}^N$ for all sequences in $\mathbb{K}$ (with the topology of pointwise convergence) so every closed subspace of $F$ has Schauder basis.

In what follows, we shall be considering first, normable closed subspaces. We shall show that a Fr\'{e}chet space is normable  if and only if each of its closed subspaces with Schauder basis is normable \cite{SlW4}, then we show that a Fr\'{e}chet space with a Schauder basis $(x_n)$ has a subsequence $(x_{n_k})$ whose closed linear span is isomorphic to $C_0$ \cite{SlW4}.
On normable closed subspace, we have the following results.

\begin{lem}[\cite{SlW4}] \label{lem1}
	Let $n \in \mathbb{N}$ and let $p_1, \ldots, p_n$ be continuous seminorms on a metrizable locally convex space $X$ of countable type. Let $M$ be a finite-dimensional subspace of $X$. Then for every $t \in (0,1]$ there exists a closed subspace $L$ of $X$ with $\dim (X/L) < \infty $ such that any $x \in L$ is $t-$orthogonal to $M$ with respect to $p_i$ for all $1 \leq i \leq n$
\end{lem}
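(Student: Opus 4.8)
The plan is to reduce to a single seminorm and then intersect. Since $t$-orthogonality to $M$ with respect to $p_1,\dots,p_n$ is precisely the conjunction of $t$-orthogonality to $M$ with respect to each $p_i$, and since a finite intersection of closed subspaces of finite codimension is again closed of finite codimension (as $X/(A\cap B)$ embeds in $X/A\times X/B$), it suffices to handle the case $n=1$: fix one continuous seminorm $p$ and produce a closed $L\subseteq X$ with $\dim(X/L)<\infty$ all of whose elements are $t$-orthogonal to $M$ with respect to $p$, then intersect the spaces obtained for $p_1,\dots,p_n$.

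For a single $p$ I would descend to the associated normed space. Let $N=\ker p$ and $\pi\colon X\to E:=X/N$ the quotient map, $E$ carrying the norm $\bar p$ induced by $p$. Since $p$ is continuous, $\pi$ is continuous, and since $X$ is of countable type, so is $E$; moreover $\bar M:=\pi(M)$ is finite-dimensional, say $\dim\bar M=m$. If I can find a closed $\bar L\subseteq E$ with $\dim(E/\bar L)<\infty$ every element of which is $t$-orthogonal to $\bar M$ with respect to $\bar p$, then $L:=\pi^{-1}(\bar L)$ is closed in $X$ with $\dim(X/L)=\dim(E/\bar L)<\infty$, and for $x\in L$, $a\in\mathbb{K}$, $y\in M$ one reads off $p(ax+y)=\bar p(a\pi x+\pi y)\ge t\max\{\bar p(a\pi x),\bar p(\pi y)\}=t\max\{p(ax),p(y)\}$, because $\pi y\in\bar M$. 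So it is enough to build $\bar L$ inside $E$.

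To construct $\bar L$, fix $s\in(0,1)$ with $s^2\ge t$ (the boundary case $t=1$ forces $s=1$ and needs $\mathbb{K}$ spherically complete, but is otherwise identical) and choose an $s$-orthogonal basis $v_1,\dots,v_m$ of $\bar M$ with respect to $\bar p$, which exists since $\bar M$ is finite-dimensional. Let $\phi_j\in\bar M^{*}$ be the coordinate functionals, $\phi_j(v_i)=\delta_{ij}$; $s$-orthogonality gives $\|\phi_j\|\le (s\,\bar p(v_j))^{-1}$. Because $E$ is of countable type, the non-archimedean Hahn--Banach theorem applies up to an arbitrarily small norm loss, so each $\phi_j$ extends to $\tilde\phi_j\in E^{*}$ with $\|\tilde\phi_j\|\le (s^{2}\,\bar p(v_j))^{-1}$. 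Set $\bar L:=\bigcap_{j=1}^{m}\ker\tilde\phi_j$, a closed subspace with $\dim(E/\bar L)\le m$. Now take $x\in\bar L$ and $y=\sum_j b_j v_j\in\bar M$: since $x\in\ker\tilde\phi_j$ and $\tilde\phi_j$ extends $\phi_j$, we have $\tilde\phi_j(x+y)=b_j$, hence $|b_j|\le\|\tilde\phi_j\|\,\bar p(x+y)\le (s^{2}\bar p(v_j))^{-1}\bar p(x+y)$. The ultrametric inequality then gives $\bar p(y)\le\max_j|b_j|\,\bar p(v_j)\le s^{-2}\bar p(x+y)$ and therefore $\bar p(x)=\bar p((x+y)-y)\le\max\{\bar p(x+y),\bar p(y)\}\le s^{-2}\bar p(x+y)$. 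Thus $\bar p(x+y)\ge s^{2}\max\{\bar p(x),\bar p(y)\}\ge t\max\{\bar p(x),\bar p(y)\}$ for every $y\in\bar M$; rescaling $y\mapsto y/a$ for $a\ne0$ (the case $a=0$ being trivial) yields $\bar p(ax+y)\ge t\max\{\bar p(ax),\bar p(y)\}$, i.e. $x$ is $t$-orthogonal to $\bar M$.

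The only step that is not mere bookkeeping is the Hahn--Banach extension: over a non-spherically-complete $\mathbb{K}$ one cannot extend functionals without loss of norm, and it is exactly the hypothesis that $X$ (hence $E=X/\ker p$) is of countable type that restores extension up to any prescribed factor, which is precisely what the estimate requires. Everything else — the reduction to one seminorm, the passage to $X/\ker p$, the choice of a near-orthogonal basis of the finite-dimensional space $\bar M$, and the closing ultrametric computation — is routine, so I expect the argument above to go through essentially as written.
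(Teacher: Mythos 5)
Your argument is correct and is essentially the paper's proof: the paper likewise passes to the quotient normed spaces $X/\ker p_i$, obtains there a continuous projection $Q_i$ onto $\pi_i(M)$ of norm at most $t^{-1}$ (citing van Rooij, which is where the countable-type hypothesis enters), pulls back $\ker Q_i$, and intersects over $i$, with the same codimension bookkeeping. Your Hahn--Banach extension of the coordinate functionals of a near-orthogonal basis of $\pi(M)$ is just an explicit construction of that projection, so the two arguments coincide up to packaging (and both share the same delicacy at the endpoint $t=1$, which you rightly flag).
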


\begin{proof}
	Let $1 \leq i \leq n$ and $Y_i = X/\ker p_i$, let $\pi_i : X \rightarrow Y_i$ be the quotient mapping and denote by $(a_i \tilde{p}_i)$ of countable type, then there exists a linear continuous projection $Q_i$ of $G_i$ onto $\pi _i(M)$ of norm less than or equal to $t^{-1}$\cite{VA1}
	
	Let $H_i = Y_i \cup \ker Q_i$ and $X_i = \pi _1 ^{-1} (H_i)$\\
	Any $x \in X_i$ is $t-$orthogonal to $M$ with respect to $p_i$, indeed let $\alpha \in \mathbb{K}, m \in M, z  = \pi _i(m)$ and $y = \pi _i(x)$. Since $z = Q_i (\alpha y + z)$, then $\tilde{p}_i(z) \leq t^{-1} \tilde{p}_i(\alpha y + z)$. Hence $$ \tilde{p}_i(\alpha y + z) \geq t \max \{ \tilde{p}_i(\alpha y), \tilde{p}_i(z) \} \cite{VA1}$$
	Thus
	$$p_i(\alpha x + m) \geq t \max \{ p_i(\alpha x), p_i(m) \}$$
	Let $L = \bigcap _{i=1}^{n} X_i$. Any $x \in L$ is $t-$orthogonal to $M$ with respect to $p_i$ for all $1 \le i \leq n$. Clearly, $L$ is a closed subspace of $X$ and
	$$\dim (X/L) \leq \sum_{i=1}^{n} \dim (X/X_i) = \sum_{i=1}^{n} \dim (Y_i/H_i)$$
	$$ \sum_{i=1}^{n} \dim (G_i/\ker Q) < \infty $$
\end{proof}

\begin{lem} \label{lem2}
	Let $X$ be a metrizable locally convex space with a base $(p_k)$ in $P(X)$. Assume that $(s_n) \subset (0,1)$ with $s:= \prod _{n=1}^{\infty} s_n >0$. Then any sequence $(y_n) \subset (X \ker p_i)$ such that $y_{n+1}$ is $s_{n+1}-$orthogonal to $\lim\{ y_1, \ldots, y_n \}$ with respect to $p_i$ for all $1 \le i \le n$ and $n \in \mathbb{N}$, is orthogonal in $X$.
\end{lem}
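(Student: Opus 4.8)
The plan is to manufacture from the given base $(p_k)$ an \emph{equivalent} base $(q_k)$ in $P(X)$ with respect to which $(y_n)$ is genuinely $1$-orthogonal; since $1$-orthogonality with respect to a seminorm $p$ means $p\big(\sum_i a_i x_i\big)=\max_i p(a_i x_i)$, this is precisely what the definition of ``orthogonal'' requires. One cannot simply rescale the $p_k$: the triangular hypothesis only controls how $y_{n+1}$ sits against $\mathrm{lin}\{y_1,\dots,y_n\}$, and only in $p_1,\dots,p_n$, so for a fixed $k$ the whole sequence $(y_n)$ need not be $t$-orthogonal with respect to $p_k$ for any $t>0$ --- the first $k$ vectors are uncontrolled in $p_k$. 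The idea is therefore to build the orthogonalising seminorm first on $E:=\mathrm{lin}\{y_1,y_2,\dots\}$ and then extend it continuously to all of $X$.

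\emph{Normalisation and the seminorms $m_k$.} Replacing each $p_k$ by $\max(p_1,\dots,p_k)$ --- which preserves the hypothesis as well as the conclusion, since both $s$-orthogonality and $1$-orthogonality pass to finite maxima of seminorms --- I may assume $p_1\le p_2\le\cdots$. Using that $p_i(y_n)\neq 0$ and that (a case of the hypothesis) $y_j$ is $s_j$-orthogonal to $\mathrm{lin}\{y_1,\dots,y_{j-1}\}$ with respect to $p_{j-1}$, an easy induction shows that for each $n\ge 1$ the vectors $y_1,\dots,y_{n+1}$ have linearly independent images in $X/\ker p_n$, hence in $X/\ker p_k$ for all $k\ge n$; in particular the $y_j$ are linearly independent and $p_k$ restricts to a norm on the finite-dimensional space $E_k:=\mathrm{lin}\{y_1,\dots,y_k\}$. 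Thus
$$m_k\Big(\sum_j a_j y_j\Big):=\max_j p_k(a_j y_j)$$
is a well-defined seminorm on $E$, with $p_k\le m_k$ on $E$ by the ultrametric inequality and with $(y_n)$ manifestly $1$-orthogonal with respect to $m_k$.

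\emph{Peeling estimate.} Fix $k$. For $n\ge k$ the vector $y_{n+1}$ is $s_{n+1}$-orthogonal to $\mathrm{lin}\{y_1,\dots,y_n\}$ with respect to $p_k$, so an induction on $n\ge k$ gives
$$p_k\Big(\sum_{j=1}^{n} a_j y_j\Big)\ \ge\ \Big(\prod_{j=k+1}^{n}s_j\Big)\,\max\Big\{\,p_k\Big(\sum_{j=1}^{k}a_j y_j\Big),\ \max_{k<j\le n}p_k(a_j y_j)\,\Big\}\ \ge\ s\,\max\Big\{\,p_k\Big(\sum_{j=1}^{k}a_j y_j\Big),\ \max_{k<j\le n}p_k(a_j y_j)\,\Big\},$$
the last step because $\prod_{j=k+1}^{n}s_j\ge\prod_{j=1}^{\infty}s_j=s>0$. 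Since $E_k$ is finite-dimensional over the complete field $\mathbb{K}$ and $p_k|_{E_k}$ is a norm, there is $c_k\in(0,1]$ with $p_k\big(\sum_{j\le k}a_j y_j\big)\ge c_k\max_{j\le k}p_k(a_j y_j)$; inserting this into the estimate gives $p_k\ge s c_k\,m_k$ on each $E_n$ with $n\ge k$, hence $p_k\ge s c_k\,m_k$ on all of $E$. In particular $m_k$ is $\tau$-continuous on $E$.

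\emph{Extension to $X$ and conclusion.} Put
$$q_k(x):=\inf\Big\{\,\max\big(m_k(u),\,(s c_k)^{-1}p_k(x-u)\big):u\in E\,\Big\}\qquad(x\in X).$$
A routine check --- using that $m_k,p_k$ are seminorms, $E$ is a subspace and $s c_k\le 1$ --- shows $q_k$ is a seminorm on $X$ satisfying $p_k\le q_k\le (s c_k)^{-1}p_k$, so $(q_k)$ is again a base in $P(X)$, while taking $u$ equal to the argument gives $q_k|_E=m_k$, whence $(y_n)$ is $1$-orthogonal with respect to $q_k$. Passing to $\max(q_1,\dots,q_k)$ makes the base increasing without spoiling either property, and then $(y_n)$ is orthogonal in $X$. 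I expect this last step to be the main obstacle: the extension must restrict to \emph{exactly} $m_k$ on $E$ (for true $1$-orthogonality rather than mere $s c_k$-orthogonality) \emph{and} stay trapped between $p_k$ and a fixed multiple of $p_k$ (so that $(q_k)$ still generates $\tau$); the inf-convolution above is designed to do both, and the delicate point is the inequality $p_k\le q_k$, which follows from applying the ultrametric inequality to $x=u+(x-u)$ together with $p_k(u)\le m_k(u)$ on $E$. (A secondary technical point is the finite-dimensional normalisation producing $c_k$, which is where completeness of $\mathbb{K}$ is used.)
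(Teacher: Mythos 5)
Your proof is correct, and its core coincides with the paper's: the inductive ``peeling'' estimate $p_k\big(\sum_{j=1}^{n}a_jy_j\big)\ge\big(\prod_{j=k+1}^{n}s_j\big)\cdot(\cdots)\ge s\cdot(\cdots)$, combined with a finite-dimensional constant (your $c_k$, the paper's $d_m$) to conclude that $(y_n)$ is $(sc_k)$-orthogonal with respect to $(p_k)$. Where you differ is the final reduction. The paper declares at the outset that it suffices to prove $(t_m)$-orthogonality with respect to $(p_m)$ for some $(t_m)\subset(0,1]$ — this is Śliwa's standard equivalence between $(t_k)$-orthogonality with respect to some base and orthogonality — and its proof then literally trails off (``Hence, we can deduce that.'') without supplying that step. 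You prove the reduction inline: the seminorms $m_k(\sum_j a_jy_j)=\max_jp_k(a_jy_j)$ on $E=\mathrm{lin}\{y_n\}$, the two-sided bound $sc_k\,m_k\le p_k\le m_k$ on $E$, and the inf-convolution extension $q_k$ with $p_k\le q_k\le(sc_k)^{-1}p_k$ and $q_k|_E=m_k$, which yields a genuine base of $P(X)$ with respect to which $(y_n)$ is $1$-orthogonal. I checked the ``routine'' verifications (the seminorm axioms for $q_k$, $q_k|_E=m_k$ via $m_k\le(sc_k)^{-1}p_k$ on $E$, the linear-independence induction making $m_k$ well defined, and the use of completeness of $\mathbb{K}$ for $c_k$ — which the paper also needs, silently, for $d_m$); they all go through. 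So your argument is a completed, self-contained version of the paper's, the only substantive addition being the explicit construction of the equivalent base that the paper outsources to a cited equivalence.
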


\begin{proof}
	It is enough to show that the sequence $(y_n)$ is $(t_m)-$orthogonal with respect to $(p_m)$ for some $(t_m) \subset (0,1]$.\\
	Let $m \in \mathbb{N}$ and $\alpha _1, \ldots, \alpha _k \in \mathbb{K}$. Then
	$$p_1 \left( \sum_{i=1}^{m} \alpha _i y_i \right) \geq d_m \max_{1 \le i \le m} p_m(\alpha_iy_i)  $$
	Let $k > m$ and $\alpha_1,\ldots , \alpha_k \in \mathbb{K}$. Then
	$$p_m \left( \sum_{i=1}^{k} \alpha_iy_i \right) \ge \left( \prod_{i=m+1}^{k} s_i \right) d_m \max_{1 \le i \le k} (\alpha_iy_i) \ge s d_m \max_{1 \le i \le k}p_m (\alpha_iy_i)$$
	
	Thus the sequence $(y_n)$ is $(sd_m)-$orthogonal with respect to $(p_m)$. Hence, we can deduce that.
\end{proof}

\begin{cor}[\cite{SlW6}]
	Any infinite-dimensional Fr\'{e}chet space has a Schauder basic sequence, we have our first theorem in what follows.
\end{cor}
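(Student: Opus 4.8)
The plan is to construct the required sequence by an inductive orthogonalization against the finite‑dimensional spans produced so far, drawing room from Lemma~\ref{lem1} at each step and then invoking Lemma~\ref{lem2} to turn the finite stages into a genuinely orthogonal sequence; since an orthogonal sequence of nonzero vectors in a Fr\'{e}chet space is a Schauder basic sequence, this will finish the proof.

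First I would pass to a subspace of countable type, because Lemma~\ref{lem1} is stated under that hypothesis and an arbitrary Fr\'{e}chet space need not be of countable type. Let $X$ be infinite‑dimensional and Fr\'{e}chet, choose a linearly independent sequence $(z_n)$ in $X$, and put $Y=\overline{\operatorname{lin}}\{z_n:n\in\mathbb{N}\}$. Then $Y$ is a closed, hence Fr\'{e}chet, subspace of $X$, it is infinite‑dimensional, and it is of countable type; moreover a sequence which is a Schauder basis of a closed subspace of $Y$ is a Schauder basis of that same subspace regarded inside $X$, since the property is intrinsic to the subspace. So we may replace $X$ by $Y$ and assume $X$ is an infinite‑dimensional Fr\'{e}chet space of countable type with an increasing base $(p_k)$ in $P(X)$. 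Fix $(s_n)\subset(0,1)$ with $s:=\prod_{n=1}^{\infty}s_n>0$; for instance $s_n=1-2^{-n}$ works, since $\sum_n 2^{-n}<\infty$.

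Next I would build $(y_n)$ recursively, keeping it linearly independent. Pick $y_1\neq 0$. Given linearly independent $y_1,\dots,y_n$, set $M_n=\operatorname{lin}\{y_1,\dots,y_n\}$ and apply Lemma~\ref{lem1} with the seminorms $p_1,\dots,p_n$, the finite‑dimensional subspace $M_n$, and $t=s_{n+1}\in(0,1]$: this yields a closed subspace $L_n$ of $X$ with $\dim(X/L_n)<\infty$ such that every $x\in L_n$ is $s_{n+1}$‑orthogonal to $M_n$ with respect to $p_i$ for all $1\le i\le n$. Since $X$ is infinite‑dimensional while $L_n$ has finite codimension, $L_n$ is infinite‑dimensional, so I may choose $y_{n+1}\in L_n\setminus M_n$; this preserves linear independence and in particular $y_{n+1}\neq 0$. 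The resulting sequence is linearly independent, consists of nonzero vectors, and satisfies: $y_{n+1}$ is $s_{n+1}$‑orthogonal to $\operatorname{lin}\{y_1,\dots,y_n\}$ with respect to $p_i$ for every $1\le i\le n$ and every $n\in\mathbb{N}$. By Lemma~\ref{lem2}, $(y_n)$ is orthogonal in $X$.

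It remains to read off the conclusion. Being orthogonal with respect to the base $(p_k)$ and consisting of nonzero vectors, $(y_n)$ is an orthogonal basis of its closed linear span $M=\overline{\operatorname{lin}}\{y_n\}$: the orthogonality estimates force $a_ny_n\to 0$, determine the coefficients of any expansion and make the coefficient functionals continuous, while completeness of the closed subspace $M$ of the Fr\'{e}chet space $X$ makes every such series converge in $M$. By the remark recorded in the introduction, every orthogonal basis of a metrizable locally convex space is a Schauder basis of it; hence $(y_n)$ is a Schauder basis of $M$, i.e. a Schauder basic sequence in $X$. The two points that need care are that the recursion never stalls --- which is exactly why Lemma~\ref{lem1} is formulated with the bound $\dim(X/L)<\infty$ rather than merely giving a proper subspace --- and the passage from ``every finite initial segment is well behaved with respect to finitely many seminorms'' to ``the whole sequence is orthogonal with respect to the whole base'', which is precisely where the hypothesis $\prod_n s_n>0$ of Lemma~\ref{lem2} is essential, since otherwise the distortion accumulated over infinitely many steps could destroy orthogonality entirely.
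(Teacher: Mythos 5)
The paper offers no proof of this corollary at all (it is attributed to \cite{SlW6}), but the intended derivation is visibly the one used in Case~1 of the proof of the Theorem that follows it: build $(y_n)$ inductively with Lemma~\ref{lem1}, conclude orthogonality with Lemma~\ref{lem2}, and use the fact that an orthogonal sequence of nonzero vectors is a Schauder basis of its closed linear span. Your proposal reconstructs exactly this route, and the preliminary reduction to a subspace of countable type (needed because Lemma~\ref{lem1} assumes countable type) is correctly supplied.

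There is, however, one step that does not quite go through as written. Lemma~\ref{lem2} is not stated for arbitrary sequences: its hypothesis (garbled in the paper as ``$(y_n)\subset(X\,\ker p_i)$'', but meaning $(y_n)\subset X\setminus\ker p_1$ for a non-decreasing base $(p_k)$) requires the vectors to survive the first seminorm, and behind that hypothesis lies the existence of the positive constants $d_m$ in its proof, i.e.\ the $m$-th initial segment must be $d_m$-orthogonal with respect to $p_m$ for some $d_m>0$. Choosing $y_{n+1}\in L_n\setminus M_n$ only guarantees $y_{n+1}\neq 0$; it does not prevent $y_{n+1}$ from lying in $\ker p_1$ (or in $\ker p_m$ for the later seminorms), in which case the inductive orthogonality relations, which at stage $n$ only control $p_1,\dots,p_n$, need not yield any positive $d_m$ and Lemma~\ref{lem2} cannot be invoked. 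The standard repair: first dispose of the case where every $\ker p_k$ has finite codimension (then $X$ is of finite type, hence isomorphic to a subspace of $\mathbb{K}^N$, which trivially contains a Schauder basic sequence); otherwise renumber the base so that $\dim(X/\ker p_1)=\infty$ and choose $y_{n+1}\in L_n\setminus(\ker p_1\cup M_n)$, which is possible because a vector space over an infinite field is not a union of finitely many proper subspaces and $(L_n+\ker p_1)/\ker p_1$ has finite codimension in the infinite-dimensional space $X/\ker p_1$. With that adjustment your argument is complete and coincides with the intended one.
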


\begin{thm}
	A Fr\'{e}chet Space is normable if and only if each of the closed subspaces with s Schauder basis is normable.
\end{thm}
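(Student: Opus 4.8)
The forward implication is immediate: if the topology of the Fr\'echet space $X$ comes from a single norm $\|\cdot\|$, then for any subspace $Y$ the restriction of $\|\cdot\|$ generates the subspace topology, so $Y$ is normable. Thus the real content is the converse, which I would prove in contrapositive form: \emph{a non-normable Fr\'echet space $X$ has a closed subspace with a Schauder basis which is not normable}. The plan is to obtain this subspace as the closed linear span of an orthogonal sequence $(y_n)$ in $X$, which then automatically carries a Schauder basis, choosing the $y_n$ so that the non-normability of $X$ survives on the span.

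Two preliminary reductions. Fix an increasing base $(p_k)$ of continuous seminorms for $X$. Since no $p_k$ generates the topology and $(p_k)$ is increasing, after passing to a subsequence I may assume $p_{k+1}$ is unbounded on $\{x\in X:p_k(x)\le1\}$ for every $k$; choosing $x_{k,m}$ with $p_k(x_{k,m})\le1$ and $p_{k+1}(x_{k,m})\ge m$ and replacing $X$ by the closed linear span of $\{x_{k,m}:k,m\in\mathbb N\}$, I may moreover assume $X$ is of countable type, this span being a non-normable Fr\'echet space of countable type (the $x_{k,m}$ show its $p_k$-unit balls are $p_{k+1}$-unbounded, and subspaces of spaces of countable type are again of countable type, so Lemma~\ref{lem1} stays available throughout the construction).

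Now the inductive construction. Fix $(s_n)\subset(0,1)$ with $\prod_ns_n>0$. Given $y_1,\dots,y_{n-1}$, put $M_{n-1}=\operatorname{span}\{y_1,\dots,y_{n-1}\}$ and apply Lemma~\ref{lem1} with $t=s_n$ and the seminorms $p_1,\dots,p_{n-1}$ to obtain a closed $L_{n-1}\subseteq X$ with $\dim(X/L_{n-1})<\infty$, every element of which is $s_n$-orthogonal to $M_{n-1}$ with respect to each $p_i$, $1\le i\le n-1$. A closed finite-codimensional subspace of a non-normable Fr\'echet space is again non-normable (it is a topological complement of a finite-dimensional subspace, and a normable space times a finite-dimensional one is normable) and of countable type, so I can pick $y_n\in L_{n-1}\setminus\{0\}$ with $p_k(y_n)$ as small as I wish for the relevant low indices $k$ while $p_j(y_n)$ is as large as I wish for a suitable higher index $j$; these choices I organise by a diagonal scheme that returns to each index infinitely often, so that in the ``K\"othe matrix'' $\big(p_k(y_n)\big)_{k,n}$ one has, for every $k_0$, a \emph{fixed} $j=j(k_0)$ with $p_{j(k_0)}(y_n)/p_{k_0}(y_n)\to\infty$ along a subsequence of $n$'s. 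By construction $y_n$ is $s_n$-orthogonal to $\operatorname{span}\{y_1,\dots,y_{n-1}\}$ with respect to $p_1,\dots,p_{n-1}$ for every $n$, so Lemma~\ref{lem2} gives that $(y_n)$ is orthogonal in $X$ and hence, being orthogonal in a Fr\'echet space, a Schauder basic sequence; let $Y$ be its closed linear span. Finally $Y$ is not normable: from $c\max_n|a_n|p_k(y_n)\le p_k\big(\sum_na_ny_n\big)\le\max_n|a_n|p_k(y_n)$ (with $c>0$ fixed, coming from Lemma~\ref{lem2}) one sees that for each $k_0$ the one-term vectors $ay_n$ with $p_{k_0}(ay_n)\le1$ have $p_{j(k_0)}(ay_n)$ unbounded, so no $p_{k_0}|_Y$ generates the topology of $Y$; since $Y$ is a closed subspace of $X$ with a Schauder basis, this contradicts the hypothesis.

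\textbf{The main obstacle.} The crux is the inductive choice of the $y_n$. Lemma~\ref{lem1} forces $y_n$ into a finite-codimensional subspace $L_{n-1}$ whose codimension grows with $n$, and passing to a finite-codimensional subspace can push upward the seminorm level at which non-normability is detectable (already in $\mathbb K^{\mathbb N}$ a single hyperplane can make $p_2$ coincide with $p_1$). The task is thus to coordinate the orthogonalisation, which drives the codimensions up, with the demand that for each fixed $k_0$ \emph{some single} higher level $j(k_0)$ keep distinguishing it on infinitely many $y_n$ in the final matrix $\big(p_k(y_n)\big)$; arranging the spaces $M_n$, and (when the seminorms have nontrivial kernels) placing the $y_n$ inside those kernels, so that this survives is the heart of the argument, the remaining points (that orthogonal sequences span subspaces with Schauder bases, and that the displayed K\"othe-matrix condition is exactly non-normability of the span) being routine.
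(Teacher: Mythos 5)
Your outline follows the same route as the paper: build an orthogonal sequence inductively via Lemma~\ref{lem1}, invoke Lemma~\ref{lem2} to get a Schauder basic sequence, and arrange the matrix $\bigl(p_k(y_n)\bigr)$ so that non-normability survives on the closed span. But the step you yourself flag as ``the main obstacle'' is a genuine gap, not a routine verification: nothing in your reductions guarantees that the finite-codimensional subspace $L_{n-1}$ produced by Lemma~\ref{lem1} still contains vectors with $p_{j}(y)/p_{k_0}(y)$ large. Passing to countable type and recording the witnesses $x_{k,m}$ does not help, because those witnesses need not lie in $L_{n-1}$, and (as you note) a finite-codimensional subspace can collapse the gap between two consecutive seminorms. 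The paper closes exactly this hole by a preliminary normalization you do not perform: in the continuous-norm case it first passes to a non-normable closed subspace $Y$ of countable type and chooses a base of \emph{norms} $(p_k)$ on $Y$ with the stronger property that $p_k$ and $p_{k+1}$ are non-equivalent \emph{on every subspace of finite codimension in $Y$}. With that property in hand, the inductive step is immediate: at stage $n+1$, with $n+1\in\mathbb{N}_k$ for a prescribed partition $(\mathbb{N}_k)$ of $\mathbb{N}$ into infinite sets, one simply picks $x_{n+1}\in L_n$ with $p_k(x_{n+1})<(n+1)^{-1}p_{k+1}(x_{n+1})$, and the partition replaces your diagonal scheme by fixing $j(k_0)=k_0+1$ outright. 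Without this (or some substitute) your construction cannot be carried through, and supplying it is the actual content of the converse direction.

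A second, smaller omission: the paper treats separately the case where $X$ admits no continuous norm, producing there a closed copy of $\mathbb{K}^{\mathbb{N}}$, which is non-normable and has a Schauder basis. Your parenthetical remark about ``placing the $y_n$ inside those kernels'' gestures at this but does not substitute for the case distinction; when the $p_k$ have large kernels the orthogonalization and the ratio conditions have to be set up differently (the ratios $p_{j}(y)/p_{k_0}(y)$ are not even defined on $\ker p_{k_0}$), and the cleanest fix is the paper's: dispose of that case at the outset.
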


\begin{proof}
	It is enough to show that any non-normable Fr\'{e}chet space $X$ contains a non-normable closed subspace $\mathcal{G}$ with a Schauder basis.\\
	Considering two cases.\\
	\textbf{\underline{Case 1}} $X$ has a continuous norm\\
	$X$ contains a non-normable closed subspace $Y$ of countable type. Let $(\mathbb{N}_k)$ be a sequence of pairwise disjoint infinite sets with $\bigcup_{k=1}^{\infty} \mathbb{N}_k = \mathbb{N}$ and let $(s_n) \subset (0,1)$ with $\prod_{n=1}^{\infty} s_n > 0$; there exists a base of norms $(p_k)$ in $P(Y)$ such that for any $k \in \mathbb{N}$ the norms $p_k$ and $p_{k+1}$ are non-equivalent on any subspace of finite co-dimension in $Y$. Then using lemma \ref{lem1} we can construct inductively a sequence $(x_n) \subset Y$ such that $x_{n+1}$ is $s_{n+1}-$orthogonal to $\lim \{ x_1, \ldots , x_n \}$ with respect to $p_i$ for $1 \le i \le n, n \in \mathbb{N}$, and $p_k(x_n) < n^{-1} p_{k+1} (x_n)$ for all $n \in \mathbb{N}_k$
	
	By lemma \ref{lem2}, $(x_n)$ is orthogonal in $Y$. Clearly,
	$$ \inf _{n \in \mathbb{N}} [p_k(x_n)/p_{k+1}(x_n)] = 0 \quad \text{ for any } k \in \mathbb{N}$$
	Hence, for every $k \in \mathbb{N}$ the norms $p_k$ and $p_{k+1}$ are non-equivalent on the closed linear span $\mathcal{G}$ of $(x_n)$.\\

Thus, $\mathcal{G}$ is a non-normable closed subspace with a Schauder basis in $X$.\\
\textbf{\underline{Case 2.}} $X$ has no continuous norm.\\
Then $X$ contains a closed subspace $\mathcal{G}$ is isomorphic to $\mathbb{K}^N$ so, it has non-normable closed subspace with a Schauder basis. We see that a Fr\'{e}chet space with a Schauder basis possesses an infinite dimensional normable closed subspace with a Schauder basis from the following proposition.
\end{proof}

\begin{prop}[\cite{SlW4}]
	A Fr\'{e}chet space $X$ with a Schauder basis $(x_n)$ contains a subspace isomorphic to $C_0$ if and only if $(x_n)$ has a subsequence $(x_{k_n})$ whose closed linear span is isomorphic to $C_0$
\end{prop}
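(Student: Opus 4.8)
The converse implication is trivial: if the subsequence $(x_{k_n})$ has closed linear span isomorphic to $C_0$, that span is itself a subspace of $X$ isomorphic to $C_0$. So everything lies in the forward direction, and the plan is a gliding-hump construction followed by a refinement that is special to the non-archimedean setting. Assume $X$ contains a closed subspace $Z$ with an isomorphism $T\colon C_0\to Z$. Since every Schauder basis of a Fréchet space is orthogonal, fix an increasing base of seminorms $(p_k)$ in $P(X)$ for which $(x_n)$ is $1$-orthogonal, and let $P_m x=\sum_{i\le m}x_i^*(x)x_i$ be the partial-sum projections. Because $Z$ is isomorphic to a Banach space, its subspace topology is normable, so there is an index $k_0$ such that $p_{k_0}|_Z$ is a norm and, for every $k\ge k_0$, a constant $C_k$ with $p_k(w)\le C_k\,p_{k_0}(w)$ for all $w\in Z$.

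\emph{Step 1 (gliding hump).} I would build inductively nonzero vectors $z_i\in Z$ and integers $0=m_0<m_1<m_2<\cdots$ as follows. Given $m_{i-1}$, the operator $P_{m_{i-1}}|_Z\colon Z\to\operatorname{span}(x_1,\dots,x_{m_{i-1}})$ has finite rank, so $Z\cap\ker P_{m_{i-1}}$ has finite codimension in the infinite-dimensional space $Z$ and is therefore nonzero; choose $0\ne z_i\in Z\cap\ker P_{m_{i-1}}$, and then choose $m_i>m_{i-1}$ so large that $p_{k_0}\big((\mathrm{id}-P_{m_i})z_i\big)<p_{k_0}(z_i)$, which is possible since $P_m z_i\to z_i$ in $p_{k_0}$ and $p_{k_0}(z_i)>0$. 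Put $w_i:=P_{m_i}z_i=(P_{m_i}-P_{m_{i-1}})z_i$. Since $z_i$ vanishes on the coordinates $1,\dots,m_{i-1}$, the vector $w_i$ is supported in the block $(m_{i-1},m_i]$, so the $w_i$ are disjointly supported with respect to $(x_n)$, hence $1$-orthogonal in their closed span $W$. By the ultrametric inequality $p_{k_0}(w_i)=p_{k_0}(z_i)$, and therefore for every $k\ge k_0$ and every $i$,
$$p_k(w_i)\ \le\ p_k(z_i)\ \le\ C_k\,p_{k_0}(z_i)\ =\ C_k\,p_{k_0}(w_i).$$
Combined with $1$-orthogonality this shows that $p_{k_0}|_W$ generates the topology of $W$ and that $W\cong C_0$ (via $\sum_i b_i w_i\mapsto(b_i\,p_{k_0}(w_i))_i$).

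\emph{Step 2 (from a block basic sequence to a subsequence).} Write $w_i=\sum_{j\in B_i}a_j x_j$ with pairwise disjoint finite blocks $B_i$ and all $a_j\ne 0$. By $1$-orthogonality, $p_{k_0}(w_i)=\max_{j\in B_i}|a_j|\,p_{k_0}(x_j)$, and this maximum over a finite set is attained; choose $j_i\in B_i$ realizing it, so $|a_{j_i}|\,p_{k_0}(x_{j_i})=p_{k_0}(w_i)$ and in particular $p_{k_0}(x_{j_i})>0$. Then, for every $k\ge k_0$,
$$|a_{j_i}|\,p_k(x_{j_i})\ \le\ \max_{j\in B_i}|a_j|\,p_k(x_j)\ =\ p_k(w_i)\ \le\ C_k\,p_{k_0}(w_i)\ =\ C_k\,|a_{j_i}|\,p_{k_0}(x_{j_i}),$$
and cancelling $|a_{j_i}|\ne 0$ gives $p_k(x_{j_i})\le C_k\,p_{k_0}(x_{j_i})$. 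The $j_i$ are distinct because the $B_i$ are disjoint; enumerate them increasingly as $(x_{k_n})$, a subsequence of $(x_n)$. Being a subsequence of an orthogonal basis it is $1$-orthogonal, and the estimates $p_k(x_{k_n})\le C_k\,p_{k_0}(x_{k_n})$ for $k\ge k_0$, together with $p_k\le p_{k_0}$ for $k<k_0$ and $p_{k_0}(x_{k_n})>0$, show that $\sum_n b_n x_{k_n}$ converges in $X$ exactly when $b_n\,p_{k_0}(x_{k_n})\to 0$, with $p_{k_0}$ computing the supremum norm of the coordinate sequence. Hence $\sum_n b_n x_{k_n}\mapsto\big(b_n\,p_{k_0}(x_{k_n})\big)_n$ is an isomorphism of $\overline{\operatorname{span}}(x_{k_n})$ onto $C_0$, as required.

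The part I expect to carry the weight is \emph{Step 2}: in a classical Banach space a block basic sequence of a basis need not contain, even up to perturbation, a subsequence of that basis, whereas here the ultrametric orthogonality lets one keep from each block the single coordinate of largest $p_{k_0}$-weight and read off the isomorphism with $C_0$ directly. Step 1 is the standard gliding hump, the only point needing care being the observation that $P_m|_Z$ has finite rank, so that at every stage infinitely many ``fresh'' directions of $Z\cong C_0$ remain available; a secondary bookkeeping point is keeping track of the single seminorm index $k_0$ throughout, which works precisely because every subspace of $Z$ inherits the same normability constants.
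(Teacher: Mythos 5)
Your proof is correct, but note that the paper itself gives no argument for this proposition at all --- it is stated as a quotation of \cite{SlW4} with no proof environment --- so there is nothing in the text to compare against; what you have supplied is a complete, self-contained version of the standard block-basic-sequence argument. The two non-archimedean facts you lean on are both legitimate and are both already quoted in the paper's introduction: every Schauder basis of a Fr\'{e}chet space is orthogonal (and passing to the increasing base $p_k=\max_{l\le k}q_l$ preserves $1$-orthogonality, since the maximum over $l$ and the maximum over the coordinates commute), and every infinite-dimensional Banach space of countable type is isomorphic to $C_0$. The gliding hump, the equality $p_{k_0}(w_i)=p_{k_0}(z_i)$ via the strong triangle inequality, the estimate $p_k(w_i)\le p_k(z_i)$ from $1$-orthogonality of the partial-sum projections, and the extraction of the dominant coordinate $j_i$ from each block with cancellation of $|a_{j_i}|$ are all sound; Step 2 is indeed the genuinely ultrametric step that has no classical analogue. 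The only repair I would ask for is cosmetic: the maps $\sum_i b_i w_i\mapsto (b_i\,p_{k_0}(w_i))_i$ and $\sum_n b_n x_{k_n}\mapsto (b_n\,p_{k_0}(x_{k_n}))_n$ are not literally isomorphisms onto $C_0$, because their entries are products of scalars in $\mathbb{K}$ with real numbers and hence need not lie in $\mathbb{K}$. Either conclude by citing van Rooij's theorem for the infinite-dimensional normable spaces of countable type $W$ and $V$ that you have constructed, or normalize with scalars $\lambda_i\in\mathbb{K}$ whose absolute values are within a bounded factor of $p_{k_0}(x_{k_i})^{-1}$; neither change affects the structure of your argument.
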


\begin{rem}
	Let $(x_n)$ be a Schauder basis in a Fr\'{e}chet space $X$. Assume that $(x_n)$ is $1-$orthogonal with respect to a base $(p_k)$ in $P(X)$. Then $(x_n)$ has a subsequence $(x_{k_n})$ whose closed linear span is isomorphic to $C_0$ if and only if there exist an infinite subset $M$ of $\mathbb{N}$, a sequence $(d_k) \subset (0,1)$ and $K_0 \in \mathbb{N}$ such that $p_k(x_n) \ge d_{k+1}p_{k+1} (x_n) > 0$ for all $K \ge K_0$ and $n \in M$. Clearly, any Fr\'{e}chet space which contains a closed subspace isomorphic to $C_0$ is non-nuclear, but the converse is not true.
\end{rem}

\section{Closed Subspaces without Schauder Base in non-archimedean Fr\'{e}chet Space}

Next, we look at existence of closed subspaces without Schauder basis in Fr\'{e}chet spaces.
Let $X$ be a Fr\'{e}chet Space which is not of finite type such that none of its subspaces is isomorphic to $C_0$. We shall establish from the ideas of \cite{SlW2} that $X$ contains infinitely many pairwise non-isomorphic closed subspaces (without a Schauder basis ) with a strong finite-dimensional Schauder decomposition \cite{SlW3}. Thus, we clearly obtain the following:
\begin{prop}[\cite{SlW3}]
	Any non-normable Fr\'{e}chet space $Y$ with a continuous norm and with a Schauder basis contains an infinite dimensional closed subspaces without a strongly finite dimensional Schauder  decomposition. Hence, in other words, there exist nuclear Fr\'{e}chet spaces with a strong finite dimensional Schauder decomposition but without a Schauder basis.
\end{prop}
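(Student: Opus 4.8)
The plan is to reduce the statement to an explicit subspace construction inside $Y$ and then to obstruct every bounded-block decomposition by an equicontinuity argument. First I would fix the metric structure. Since $Y$ has a continuous norm and is non-normable, I would choose an increasing base $(p_k)$ in $P(Y)$ consisting of \emph{norms} such that $p_k$ and $p_{k+1}$ are non-equivalent on every subspace of finite codimension, available exactly as in Case 1 of the preceding theorem. Replacing the given Schauder basis by an orthogonal one via Lemma \ref{lem1} and Lemma \ref{lem2}, I may assume $(x_n)$ is $1$-orthogonal with respect to $(p_k)$ and that, for every $k$, $\inf_n p_k(x_n)/p_{k+1}(x_n)=0$; indeed the non-equivalence on finite-codimensional subspaces forces the ratios $p_k(x_n)/p_{k+1}(x_n)$ to accumulate at $0$ along the basis, exactly as in the construction proving that theorem.

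Next I would build the subspace $Z$ as the closed linear span of a block basis. Partition $\mathbb{N}$ into consecutive finite blocks $F_1,F_2,\dots$ with $|F_j|\to\infty$, and select, inside the $j$-th block, basis vectors whose successive ratios $p_k(x_n)/p_{k+1}(x_n)$ are prescribed to run through $j$ \emph{distinct} orders of magnitude as $k$ ranges over the first $j$ levels. The design goal is that any finite-dimensional piece of $Z$ that is to be isolated by a continuous projection must ``see'' a growing number of active scales: resolving the part of $Z$ carried by $F_j$ requires separating $j$ incomparable scale-behaviours, so it cannot be captured by blocks of any fixed dimension $d$. Being a closed block-subspace of a space of countable type with a continuous norm, $Z$ is an infinite-dimensional closed subspace of $Y$.

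For the obstruction I would argue by contradiction. Suppose $Z$ carried a strong finite-dimensional Schauder decomposition $(P_n)$ with $\sup_n\dim P_n(Z)\le d$. The partial-sum operators $S_N=\sum_{n\le N}P_n$ are continuous and converge pointwise to the identity, so by the non-archimedean Banach--Steinhaus theorem (the Fr\'{e}chet space $Z$ is barrelled) the family $(S_N)$ is equicontinuous. Equicontinuity yields, for each level $k$, a positive constant $t_k$ such that the decomposition is $t_k$-orthogonal with respect to $p_k$; in particular every $d$-dimensional block $P_n(Z)$ is $t_k$-orthogonal to its complement at level $k$ with the \emph{same} constant $t_k$ for all $n$. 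On the other hand, the scale-separation built into $Z$ forces any $\le d$-dimensional subspace on which a projection is to act continuously at the first $j>d$ levels to have orthogonality constant no larger than a quantity tending to $0$ with $j$, since a $\le d$-dimensional block cannot be uniformly orthogonal at more than $d$ mutually incomparable scales at once. Letting $j\to\infty$ contradicts $t_k>0$ at the relevant level, so no finite $d$ works and $Z$ has no strong finite-dimensional Schauder decomposition.

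Finally, for the clause beginning ``hence'' I would run the \emph{same} machinery with the scale growth switched off: keeping the block sizes bounded produces a nuclear Fr\'{e}chet space $E$ of countable type, with a continuous norm, that genuinely \emph{admits} the block projections as a strong finite-dimensional Schauder decomposition, yet for which no single choice of bases of the blocks is $t_k$-orthogonal with a positive constant uniformly in $n$ at every level $k$. Equivalently, the infinite product of orthogonality constants that Lemma \ref{lem2} requires to be positive is here driven to $0$, so the assembled sequence fails to be a Schauder basis; this is the promised nuclear Fr\'{e}chet space with a strong finite-dimensional Schauder decomposition but without a Schauder basis. The main obstacle, in both halves, is the quantitative scale-counting step: one must prove that the prescribed decay of $p_k(x_n)/p_{k+1}(x_n)$ forces a strictly positive lower bound on the dimension of any block able to host a continuous projection at sufficiently many levels, and conversely that bounded blocks cannot be orthogonalised uniformly. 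This is precisely where the explicit K\"{o}the-type estimates of \cite{SlW2} and \cite{SlW3} carry the argument, and where I expect the real work to lie.
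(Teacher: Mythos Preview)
The paper does not supply its own proof of this proposition: it is stated, with attribution to \cite{SlW3}, and immediately followed by a remark and a theorem, with no proof environment in between. There is therefore nothing in the present paper to compare your attempt against; the authors are quoting the result, not reproving it.

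On the substance of your sketch: the overall strategy (pass to a $1$-orthogonal basis, build a block subspace whose blocks encode an unbounded number of independent scale behaviours, and obstruct any bounded-rank Schauder decomposition via equicontinuity of the partial sums) is the right shape and matches the spirit of \cite{SlW2,SlW3}. However, you yourself flag that the decisive step --- showing that a $\le d$-dimensional range cannot remain uniformly $t_k$-orthogonal across more than $d$ incomparable scales --- is exactly where the K\"othe-matrix estimates from those references do the work, and you do not carry it out. As written the proposal is a plan that defers its hard inequality to the literature, not a self-contained proof.

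One further point: the ``hence'' clause in the statement is not a logical consequence of the first sentence (the first asserts the existence of subspaces \emph{without} a strong finite-dimensional Schauder decomposition; the second asserts the existence of spaces \emph{with} such a decomposition but without a Schauder basis). You correctly treat it as a separate construction rather than a deduction, which is the only coherent reading; the paper's phrasing is loose here and simply records two conclusions drawn from \cite{SlW2,SlW3}.
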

\begin{rem}
It is a fact that an infinite dimensional Fr\'{e}chet space $X$ of countable type that is not isomorphic to any of the following spaces $C_0, C_0 \times \mathbb{K}^N, \mathbb{K}^N$. Then $X$ contains a non-normable closed subspace with a continuous norm, and it has been proved that $X$ being a non-normable Fr\'{e}chet space with a continuous norm and with a finite dimensional Schauder decomposition $(p_n)$, then $X$ contains a non-normable closed subspace $Y$ with a Schauder basis \cite{SlW3}. With these facts we obtained the following
\end{rem}
\begin{thm}
	Let $X$ be an infinite-dimensional Fr\'{e}chet space, which is no isomorphic to any of the following spaces: $C_0,C_0 \times \mathbb{K}^N, \mathbb{K}^N$. Then $X$ contains an infinite-dimensional closed subspace without a strongly finite-dimensional Schauder decomposition. Finally, we obtain the following
\end{thm}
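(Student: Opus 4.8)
The plan is to argue by a case split according to whether $X$ is of countable type, reducing the countable-type case, after peeling off two closed subspaces, to the Proposition immediately above. The decisive elementary observation is this: if a locally convex space $Z$ admits a strongly finite-dimensional Schauder decomposition $(P_n)$, then $\sup_n\dim P_n(Z)<\infty$, so each $P_n(Z)$ is finite-dimensional, and choosing a finite basis in each $P_n(Z)$ exhibits a countable set whose $K$-linear hull contains every partial sum $\sum_{n\le N}P_n z$ and is therefore dense in $Z$; hence $Z$ is of countable type. Moreover the same family $(P_n)$ is also, trivially, a finite-dimensional Schauder decomposition. Consequently, if $X$ is \emph{not} of countable type it possesses no strongly finite-dimensional Schauder decomposition whatsoever, and since $X$ is infinite-dimensional we may take the required subspace to be $X$ itself.

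Assume henceforth that $X$ is of countable type. As $X$ is infinite-dimensional and, by hypothesis, not isomorphic to any of $C_0$, $C_0\times\mathbb{K}^N$, $\mathbb{K}^N$, the fact recorded in the Remark preceding this theorem furnishes a non-normable closed subspace $Z$ of $X$ equipped with a continuous norm. Being non-normable, $Z$ is infinite-dimensional; being closed in the Fr\'{e}chet space $X$, it is itself Fr\'{e}chet. If $Z$ admits no finite-dimensional Schauder decomposition, then by the observation of the previous paragraph it admits no strongly finite-dimensional Schauder decomposition either, and $Z$ is the desired infinite-dimensional closed subspace of $X$.

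There remains the case in which $Z$ carries a finite-dimensional Schauder decomposition. Then $Z$ is a non-normable Fr\'{e}chet space with a continuous norm and a finite-dimensional Schauder decomposition, so the result quoted in the same Remark yields a non-normable closed subspace $Y$ of $Z$ with a Schauder basis. Closed in $Z$, the space $Y$ is again Fr\'{e}chet and inherits the continuous norm of $Z$, and it is non-normable with a Schauder basis; hence the Proposition preceding this theorem applies to $Y$ and delivers an infinite-dimensional closed subspace $W$ of $Y$ without a strongly finite-dimensional Schauder decomposition. Since $W$ is closed in $Y$, $Y$ in $Z$, and $Z$ in $X$, the space $W$ is closed in $X$, which completes the argument.

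The genuinely hard content is absorbed into the three imported ingredients — the production of a non-normable closed subspace with a continuous norm, the passage from a finite-dimensional Schauder decomposition to a Schauder basis on a further closed subspace, and the Proposition above — so the work left here is chiefly the organisation of the cases. The point that needs care is recognising that the two degenerate situations ($X$ not of countable type, and $Z$ without any finite-dimensional Schauder decomposition) are already conclusive on their own, via the trivial implication ``strongly finite-dimensional $\Rightarrow$ finite-dimensional'', so that the one substantive case collapses, after a single application of the embedding result, to the exact hypotheses of the preceding Proposition; beyond that, only the stability of ``Fr\'{e}chet'', ``continuous norm'' and ``closed'' under passing to closed subspaces needs checking, and that is routine.
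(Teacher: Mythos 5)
Your proof is correct and follows exactly the route the paper intends: the theorem is stated without proof, but the two facts recorded in the Remark immediately before it (a countable-type Fr\'{e}chet space not isomorphic to $C_0$, $C_0\times\mathbb{K}^N$ or $\mathbb{K}^N$ contains a non-normable closed subspace with a continuous norm; a non-normable Fr\'{e}chet space with a continuous norm and a finite-dimensional Schauder decomposition contains a non-normable closed subspace with a Schauder basis) are clearly meant to be combined with the preceding Proposition in precisely the way you do. Your extra observation that a strongly finite-dimensional Schauder decomposition forces countable type --- which settles both the case where $X$ is not of countable type and the case where $Z$ has no finite-dimensional decomposition at all --- is a gap the paper's setup silently presupposes, and you fill it correctly.
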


\begin{prop}
	Let $X$ be an infinite-dimensional metrizable locally convex space whose completion $X$ is not isomorphic to any of the following spaces $C_0,C_0 \times \mathbb{K}^N, \mathbb{K}^N$. Then $X$ contains a closed subspace without an orthogonal Schauder basis
\end{prop}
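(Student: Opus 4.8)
The plan is to reduce the assertion to the Fr\'echet case settled in the preceding Theorem, applied to the completion $\widehat{X}$ of $X$, and then to transport the resulting subspace back down into $X$. First, if $X$ is not of countable type, then $X$ itself has no Schauder basis (every locally convex space with a Schauder basis is of countable type), hence no orthogonal Schauder basis, and $X$ is a closed subspace of itself; so we may assume $X$ is of countable type. Then $\widehat{X}$ is an infinite-dimensional Fr\'echet space of countable type which, by hypothesis, is not isomorphic to any of $C_0$, $C_0\times\mathbb{K}^N$, $\mathbb{K}^N$. By the preceding Theorem there is an infinite-dimensional closed subspace $G\subset\widehat{X}$ having no strongly finite-dimensional Schauder decomposition. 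A Schauder basis $(e_n)$ of $G$, with coefficient functionals $(e_n^{*})$, would produce the rank-one continuous projections $P_nx=e_n^{*}(x)e_n$, which satisfy $\sum_n P_nx=x$ and $P_nP_m=\delta_{nm}P_n$ and hence form a Schauder decomposition of $G$ with $\sup_n\dim P_n(G)=1<\infty$; so $G$ has no Schauder basis. Being a closed subspace of a Fr\'echet space, $G$ is itself Fr\'echet, and since every orthogonal basis of a metrizable locally convex space is Schauder, $G$ has no orthogonal Schauder basis.

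The delicate point is to reproduce this phenomenon inside $X$, that is, to find an infinite-dimensional closed subspace $Y\subset X$ whose completion inside $\widehat{X}$ has no orthogonal Schauder basis. The naive choice $Y=X\cap G$ fails in general, since $X\cap G$ need not be dense in $G$: perturbing a prescribed dense sequence of $G$ into $X$ destroys membership in $G$. I would instead \emph{localise} the construction underlying the preceding Theorem. There the bad subspace arises as the closed linear span of a sequence built by an inductive scheme of the kind in Lemma \ref{lem1} and Lemma \ref{lem2} (successive $s_n$-orthogonality with $\prod_n s_n>0$, together with the accompanying bounds on ratios of seminorms), and each property used along the way is preserved under sufficiently small perturbations of the vectors. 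As $X$ is dense in $\widehat{X}$, every step can be performed with a vector taken from $X$, which yields a sequence $(z_n)\subset X$ such that $\widehat{Y}:=\overline{\mathrm{span}\{z_n:n\in\mathbb{N}\}}^{\,\widehat{X}}$ still has no strongly finite-dimensional Schauder decomposition, hence, exactly as above, no orthogonal Schauder basis. Put $Y:=\overline{\mathrm{span}\{z_n:n\in\mathbb{N}\}}^{\,X}$; this is a closed subspace of $X$ whose completion is $\widehat{Y}$.

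It remains to push the conclusion down from $\widehat{Y}$ to $Y$. Suppose $Y$ carried an orthogonal Schauder basis $(y_n)$, say $1$-orthogonal with respect to an increasing base $(q_k)$ of continuous seminorms on $Y$. Each $q_k$ is uniformly continuous, hence extends uniquely to a continuous seminorm $\widehat{q}_k$ on $\widehat{Y}$, and $(\widehat{q}_k)$ is a base of continuous seminorms on $\widehat{Y}$; the $1$-orthogonality inequalities involve only finite combinations of the $y_n$, so they remain valid for $(\widehat{q}_k)$. Thus $(y_n)$ is a $1$-orthogonal sequence with dense linear span in the Fr\'echet space $\widehat{Y}$, and is therefore an orthogonal Schauder basis of $\widehat{Y}$, contradicting the previous paragraph. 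Hence $Y$ is a closed subspace of $X$ without an orthogonal Schauder basis.

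I expect the main obstacle to be the localisation in the second paragraph: one must check that the entire inductive construction behind the preceding Theorem survives the requirement that every generating vector be chosen from the dense subspace $X$ --- equivalently, that the bad closed subspace of $\widehat{X}$ can be taken to be the closure of a subspace of $X$. The remaining ingredients are routine: the reduction to countable type, the implication ``no strongly finite-dimensional Schauder decomposition, so no Schauder basis'', and the stability of $1$-orthogonality and of density under completion.
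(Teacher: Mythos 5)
Your overall strategy --- locate a bad closed subspace of the completion and transport it down into $X$ --- is reasonable, and your third paragraph (extending the seminorms $q_k$ to the completion of $Y$ and observing that $1$-orthogonality is a statement about finite linear combinations, so an orthogonal Schauder basis of $Y$ would yield one of $\widehat{Y}$) is exactly the descent argument the paper uses at the very end. But your second paragraph contains a genuine gap, and you have identified it yourself: the ``localisation'' of the inductive construction is never carried out. You assert that each step of the construction behind the preceding Theorem ``is preserved under sufficiently small perturbations'' and that the generating vectors ``can be taken from $X$,'' but the preceding Theorem is quoted from the literature without proof, and the property ``has no strongly finite-dimensional Schauder decomposition'' is not visibly stable under perturbing a generating sequence. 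As written, the central step is a plan rather than an argument.

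The paper sidesteps the localisation problem entirely by a sharper initial reduction. Instead of reducing to countable type, it reduces to the case where $X$ itself has an orthogonal Schauder basis $(y_n)$ (otherwise $X$ is already the desired closed subspace of itself), so the vectors $y_n$ lie in $X$ by fiat. Choosing a non-decreasing base $(q_k)$ with respect to which $(y_n)$ is $1$-orthogonal, it partitions the indices by $D_k=\{n\in\mathbb{N}: y_n\in(\ker q_{k-1})\setminus\ker q_k\}$ and lets $Y_k$ be the closed span of $\{y_n: n\in D_k\}$ in the completion; the completion is then isomorphic to $\prod_{k}Y_k$, and if every $Y_k$ were normable the completion would be isomorphic to one of $C_0$, $C_0\times\mathbb{K}^N$, $\mathbb{K}^N$, contrary to hypothesis. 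Hence some $Y_m$ is non-normable and carries the continuous norm $q_m$, and the cited construction of Sliwa produces a linear subspace $V$ of the \emph{algebraic} span of $\{y_n: n\in D_m\}$ --- hence $V\subset X$ automatically --- whose closure in the completion has no Schauder basis. The closure of $V$ in $X$ is then the required subspace, by precisely your paragraph three. The moral is that rather than perturbing a construction performed in the completion back into $X$, one should arrange from the outset that the construction takes place inside the linear span of vectors already known to lie in $X$; if you want to repair your route, this is the reduction to make.
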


\begin{proof}
	It is enough to consider the case when $X$ has an orthogonal (Schauder basis ) $(y_m)$. Then $(y_n)$ is an orthogonal Schauder. Let $(q_k)$ be a non-decreasing base in $P(X)$ with $q_1 = 0$ such that $(y_n)$ is $(1)-$orthogonal with respect to $(q_k)$
	
	Put $D_k = \{ n \in\mathbb{N} : y_n \in (\ker q_{k-1})\backslash \ker q_k \}, k \in \mathbb{N}$. Denote by $Y_k$ the closed linear span of $\{ y_n : n \in D_k \}$ in $Y$. As in the proof of lemma 5\cite{SlW3}. We obtain that $Y$ is isomorphic to $\prod_{n=1}^{\infty} Y_n$. If all the Fr\'{e}chet spaces $Y_k, k \in \mathbb{N}$ are normable, then $Y$ is isomorphic to one of the following spaces: $C_0,C_0 \times \mathbb{K}^N, \mathbb{K}^N$, contrary to our assumption. Thus for some $m \in \mathbb{N}$, the space $Y_m$ is non-normable.
	
	Clearly, $\{ y_n, n \in D_m \}$ is an orthogonal Schauder basis of $Y_m$ and $q_m \mid Y_m$ is a continuous norm on $Y_m$. Hence by the proofs of (Lemma 1 and theorem 2  of \cite{SlW3}) there exists a linear subspace $V$ of $\lim \{ y_n : x \in DF_m \}$ such that the closure $V_1$ of $V$ in $Y$ has no Schauder basis. Then the closure $V_2$ of $V$ in $X$ has no orthogonal Schauder basis.
\end{proof}

Finally we will conclude this section by stating the following

\begin{rem}.
	\begin{enumerate}
		\item[-1] There exist nuclear Fr\'{e}chet space with a strong finite dimensional Schauder decomposition but without a Schauder basis.
		\item[-2] There exist nuclear Fr\'{e}chet space with a finite-dimensional Schauder decomposition but without a strong finite-dimensional Schauder decomposition.
	\end{enumerate}
\end{rem}

\end{document}